\newtheorem{theorem}{Theorem}[section]
\theoremstyle{definition}
\newtheorem{definition}[theorem]{Definition}
\theoremstyle{remark}
\numberwithin{equation}{section}
\begin{document}

\title[Function fitting]{A function fitting method}



\author{}
\address{}
\curraddr{}
\email{}
\thanks{}

\author{}
\address{}
\curraddr{}
\email{}
\thanks{}

\author{Rajesh Dachiraju}
\address{Hyderabad, India}
\curraddr{}
\email{rajesh.dachiraju@gmail.com}
\thanks{}



\dedicatory{}

\maketitle

\begin{abstract}
In this article, we describe a function fitting method that has potential applications in machine learning and also prove relevant theorems. The described function fitting method is a convex minimization problem and can be solved using a gradient descent algorithm. We also provide qualitative analysis on fitness to data of this function fitting method. The function fitting problem is also shown to be a solution of a linear, weak partial differential equation(PDE). We describe a way to fit a Sobolev function by giving a method to choose the optimal $\lambda$ parameter. We describe a closed form solution to the derived PDE, which enables the parametrization of the solution function. We describe a simple numerical solution using a gradient descent algorithm, that converges uniformly to the actual solution. As the functional of the minimization problem is a quadratic form, there also exists a numerical method using linear algebra. Lastly we give some numerical examples and also numerically demonstrate its application to a binary classification problem.
\end{abstract}




\section{Introduction}

A core issue in machine learning is fitting a function to a given dataset. In most machine learning models, training involves the need to fit a function to the training data to enable predicting function values at test data points, i.e., data points outside training samples. Examples include linear regression that requires fitting a straight line or a hyperplane to data, kernel methods, and neural networks with the mean square loss function. While the primary issue with linear regression is that the solution is always in the space of hyperplanes, kernel methods \cite{Shawe-Taylor:2004:KMP:975545} require a choice of kernel apriori and the solution depends on the chosen kernel. In the case of neural network models, the solution space is the entire space of continuous functions\cite{DBLP:journals/nn/Hornik91},\cite{DBLP:journals/mcss/Cybenko89}; however, it has two main theoretical disadvantages (a)  the solution not being unique and (b) the optimization not being a convex problem. In this article, we describe a function fitting method, which is a convex minimization problem with a unique solution. The solution space is the Sobolev space of continuous functions. Furthermore, we prove relevant theorems and describe numerical solutions that converge uniformly to the actual solution.

\section{Earlier Work on Function Fitting Methods}

\subsection{A function fitting problem}
Let $\boldsymbol{p_i}$ ($i = 1,2,3..n$) be the interior points of $(0,1)^m$ and $a_i \in \mathbb{R}$, such that $\sum\limits_{i = 1}^n a_i = 0$. $\boldsymbol{p_i}$ and $a_i$ constitute the data to which a function needs to be fit. 
\subsection{Earlier methods}

In methods such as \cite{duchon1977splines},\cite{fefferman2009fitting},\cite{fefferman2009fitting2},\cite{fefferman2009fitting3},\cite{fefferman2014fitting},\cite{fefferman2016fitting} and \cite{fefferman2016fitting2}, whose solution space is either the entire Hilbert space of smooth functions or the Sobolev space, the function is assumed to fit the data exactly, and the problem is posed as a smooth extension problem, under certain criteria like a minimum norm. In contrast, in this article, the function fitting problem is posed as a minimization problem, with a unique solution.

\section{Definitions}

Let $H^k(\Omega)$ denote the Sobolev Hilbert space on set $\Omega$,
$\mathbb{Z}^m$ denote the set containing all the $m$-tuples of integers, and $M$ denote the set of all continuous functions, defined on $\Omega = (0,1)^m$, that meet the periodic boundary conditions on the boundary $\partial \Omega$. Additionally, let $S = M\cap H^k(\Omega)$. 
\begin{definition}
\label{kgradient}
Define the $k$-gradient as 
\begin{equation}
\nabla^kf = (\frac{\partial^{k}f}{\partial x_1^{k}},\frac{\partial^{k}f}{\partial x_2^{k}},...\frac{\partial^{k}f}{\partial x_m^{k}}   ) 
\end{equation}
\end{definition}
\begin{definition}
\label{kLaplacian}
Define $k$-Laplacian  as
\begin{equation}\Delta^k f = \sum\limits_{i=1}^m \frac{\partial^{2k}f}{\partial x_i^{2k}}
\end{equation}
\end{definition}

\section{Minimization problem}

\label{Minimizer}

$\forall f \in S$, minimize the functional
\begin{equation}
\label{eq_edit_3}
C(f) = \|f\|_{T^k(\Omega)}^2 + \sum\limits_{i = 1}^n(f(\boldsymbol{p_i})-a_i)^2
\end{equation}

where \begin{equation}
\label{tkdef}
 \|f\|_{T^k(\Omega)}^2 =     \|f\|_{L^2(\Omega)}^2 + \lambda \|\nabla^k f\|_{L^2(\Omega)}^2   
\end{equation}
 and $\lambda$ is a positive real constant.

\begin{theorem}
\label{th_eq}
For this particular set $S$, the norm $\|.\|_{T^k(\Omega)}$ is equivalent to the Sobolev norm $\|.\|_{H^k(\Omega)}$.
\end{theorem}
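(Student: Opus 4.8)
The plan is to diagonalize both norms simultaneously via the Fourier series on the torus, exploiting the fact that every $f \in S$ satisfies periodic boundary conditions on $\Omega = (0,1)^m$. First I would expand $f(x) = \sum_{n \in \mathbb{Z}^m} \hat{f}(n)\, e^{2\pi i n \cdot x}$ and invoke Parseval's identity. Because periodicity eliminates all boundary terms, differentiation becomes multiplication by $2\pi i n_j$ on the Fourier side, so both $\|f\|_{T^k(\Omega)}^2$ and $\|f\|_{H^k(\Omega)}^2$ turn into weighted $\ell^2$ sums of $|\hat{f}(n)|^2$ with explicit nonnegative multipliers. Writing $Q(n) = 1 + \lambda (2\pi)^{2k}\sum_{j=1}^m n_j^{2k}$ for the $T^k$ symbol and $P(n) = \sum_{|\alpha|\le k}(2\pi)^{2|\alpha|}\prod_{j} n_j^{2\alpha_j}$ for the $H^k$ symbol, the whole theorem reduces to showing $c_1 P(n) \le Q(n) \le c_2 P(n)$ for every $n \in \mathbb{Z}^m$, with constants $c_1, c_2 > 0$ independent of $n$.

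The upper estimate $Q(n) \le c_2 P(n)$ is immediate: the two kinds of terms comprising $Q$, namely the constant $1$ and the pure powers $n_j^{2k}$, appear verbatim among the summands of $P$ (they correspond to the multi-indices $\alpha = 0$ and $\alpha = k e_j$), so $Q$ is dominated by $P$ after absorbing the factor $\max(1, \lambda)$. The substantive direction is the lower estimate $P(n) \le C\, Q(n)$, which amounts to controlling every mixed monomial $\prod_j n_j^{2\alpha_j}$ with $|\alpha| \le k$ by the pure powers alone. For this I would use the following elementary bound: setting $N = (1 + \sum_l n_l^{2k})^{1/(2k)} \ge 1$ we have $|n_j| \le N$ for each $j$, hence $\prod_j |n_j|^{2\alpha_j} \le N^{2|\alpha|} \le N^{2k} = 1 + \sum_l n_l^{2k}$, using $|\alpha| \le k$ and $N \ge 1$. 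Summing over the finitely many multi-indices with $|\alpha| \le k$ and comparing $1 + \sum_l n_l^{2k}$ against $Q(n)$ then yields the claim.

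Combining the two symbol estimates with Parseval gives $c_1 \|f\|_{H^k(\Omega)}^2 \le \|f\|_{T^k(\Omega)}^2 \le c_2 \|f\|_{H^k(\Omega)}^2$, which is exactly the asserted equivalence. The main obstacle, and the place where the hypotheses genuinely enter, is precisely the lower estimate: on a general domain the mixed partials cannot be recovered from the pure $k$th derivatives without boundary contributions, so the periodic boundary condition built into $S$, which makes the Fourier multipliers exact and kills every boundary term in the integration by parts, is essential. A secondary point to verify carefully is that the full Sobolev norm $\sum_{|\alpha|\le k}\|D^\alpha f\|_{L^2(\Omega)}^2$ is indeed equivalent to the Fourier weighted sum $\sum_n P(n)|\hat{f}(n)|^2$ on the torus; this is standard, but one should fix the convention used for $\|\cdot\|_{H^k(\Omega)}$ and note that the natural weights $(1+|n|^2)^k$, $1 + |n|^{2k}$, and $1 + \sum_j n_j^{2k}$ are all mutually comparable with constants depending only on $k$ and $m$.
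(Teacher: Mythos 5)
Your proof is correct and follows essentially the same route as the paper's: both diagonalize the two norms via the Fourier series on the torus (Plancherel), reducing the equivalence to a pointwise comparison of multipliers, where the only substantive step is dominating the mixed monomials $\prod_j n_j^{2\alpha_j}$ by the pure powers $1+\sum_j n_j^{2k}$. The differences are minor: the paper obtains that domination via the arithmetic--geometric mean inequality and works with the top-order convention $\sum_{|\alpha|=k}$ for the Sobolev norm, while you use the elementary bound $|n_j|\le \bigl(1+\sum_l n_l^{2k}\bigr)^{1/(2k)}$ and also cover the intermediate orders $|\alpha|\le k$.
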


\begin{proof}
As the norms $\|.\|_{T^k(\Omega)}$ for different $\lambda \in \mathbb{R}^+$ are equivalent, for this proof we consider only $\lambda = 1$.
Let $\boldsymbol{l} = (l_1,l_2,l_3,..l_m)\in \mathbb{Z}^m$ and $\alpha$ a multi-index. Let $u_{\boldsymbol{l}}$ be the Fourier series coefficients of $u\in S$, we have

\begin{equation}
    ||u||_{H^k(\Omega)}^2 = ||u||_{L^2(\Omega)}^2 + \sum_{|\alpha| =  k} ||D^{\alpha} u||_{L^2(\Omega)}^2.
\end{equation}
By Plancherel's theorem
\begin{equation}
\sum_{|\alpha| =  k} ||D^{\alpha} u||_{L^2(\Omega)}^2 =   \sum_{|\alpha| =  k} \sum_{\boldsymbol{l} \in \mathbb{Z}^k} ((2\pi)^k \boldsymbol{l^{\alpha}})^{2} |\hat{u}_{\boldsymbol{l}}|^2
=
 \sum_{\boldsymbol{l} \in \mathbb{Z}^k}( |\hat{u}_{\boldsymbol{l}}|^2 \sum_{|\alpha| =  k} ((2\pi)^k \boldsymbol{l^{\alpha}})^{2} )   
\end{equation}
By arithmetic mean-geometric mean inequality, it can be shown that 
\begin{equation}
    \sum_{|\alpha| =  k} ((2\pi)^k \boldsymbol{l^{\alpha}})^{2} \le C_k \sum_{i =  1}^m (2\pi l_i)^{2k}
\end{equation}
with $C_k$ depending only on $k$.
So 
\begin{equation}
    \sum_{|\alpha| =  k} ||D^{\alpha} u||_{L^2(\Omega)}^2 \le C_k \sum_{\boldsymbol{l} \in \mathbb{Z}^k}( |\hat{u}_{\boldsymbol{l}}|^2 \sum_{i = 1}^m (2\pi l_i)^{2k} ) = C_k \sum_{i = 1}^m( \sum_{\boldsymbol{l} \in \mathbb{Z}^k}(2\pi l_i)^{2k}|\hat{u}_{\boldsymbol{l}}|^2 ) 
\end{equation}
Using equation \ref{tkdef} and applying Plancherel's theorem in reverse
\begin{equation}
    \|u\|_{L^2(\Omega)}^2 + \sum_{i = 1}^m( \sum_{\boldsymbol{l} \in \mathbb{Z}^k}(2\pi l_i)^{2k}\hat{u}_{\boldsymbol{l}}^2 ) = \|u\|_{T^k(\Omega)}
\end{equation}
Therefore
\begin{equation}
\|u\|_{H^k(\Omega)} \le D_k \|u\|_{T^k(\Omega)}    
\end{equation}
where $D_k$ a constant depending only on $k$.
We can easily observe that $\|u\|_{H^k(\Omega)} \ge \|u\|_{T^k(\Omega)} $. Hence the norms are equivalent.
\end{proof}

\begin{theorem}
\label{th_uconv}
Given that $k>\frac{m}{2}$, If $u \in H^k(\Omega)$, then
\begin{equation}
u\in L^{\infty}(\Omega)
\end{equation}
and 
\begin{equation}
\|u\|_{L^{\infty}(\Omega)} \le K\|u\|_{H^k(\Omega)}
\end{equation}
with $K$  depending only on $k$ and $m$
\end{theorem}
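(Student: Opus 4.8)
The plan is to exploit the Fourier-series machinery already developed for Theorem \ref{th_eq}. Writing $u$ as its Fourier series $u(\boldsymbol{x}) = \sum_{\boldsymbol{l} \in \mathbb{Z}^m} \hat{u}_{\boldsymbol{l}}\, e^{2\pi i \boldsymbol{l}\cdot\boldsymbol{x}}$, I would reduce the uniform bound to an estimate on the $\ell^1$ norm of the coefficients, since $|u(\boldsymbol{x})| \le \sum_{\boldsymbol{l}} |\hat{u}_{\boldsymbol{l}}|$ pointwise. If this coefficient sum is finite, the Fourier series converges absolutely and uniformly to a continuous function agreeing with $u$ almost everywhere; this simultaneously places $u$ in $L^{\infty}(\Omega)$ and yields $\|u\|_{L^{\infty}(\Omega)} \le \sum_{\boldsymbol{l}} |\hat{u}_{\boldsymbol{l}}|$. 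So the whole statement follows once that sum is controlled by $\|u\|_{H^k(\Omega)}$.

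First I would introduce the weight $w_{\boldsymbol{l}} = 1 + \sum_{i=1}^m (2\pi l_i)^{2k}$, which by Plancherel's theorem (with $\lambda = 1$) satisfies $\sum_{\boldsymbol{l}} w_{\boldsymbol{l}} |\hat{u}_{\boldsymbol{l}}|^2 = \|u\|_{T^k(\Omega)}^2$. Splitting and applying the Cauchy--Schwarz inequality gives
\begin{equation}
\sum_{\boldsymbol{l}} |\hat{u}_{\boldsymbol{l}}| = \sum_{\boldsymbol{l}} \bigl(w_{\boldsymbol{l}}^{1/2}|\hat{u}_{\boldsymbol{l}}|\bigr)\,w_{\boldsymbol{l}}^{-1/2} \le \Bigl(\sum_{\boldsymbol{l}} w_{\boldsymbol{l}}|\hat{u}_{\boldsymbol{l}}|^2\Bigr)^{1/2}\Bigl(\sum_{\boldsymbol{l}} w_{\boldsymbol{l}}^{-1}\Bigr)^{1/2}.
\end{equation}
The first factor is exactly $\|u\|_{T^k(\Omega)}$, which by the equivalence established in Theorem \ref{th_eq} is bounded by $\|u\|_{H^k(\Omega)}$. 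It therefore remains only to show that the constant $\bigl(\sum_{\boldsymbol{l}} w_{\boldsymbol{l}}^{-1}\bigr)^{1/2}$ is finite and depends only on $k$ and $m$; this constant will serve as the desired $K$.

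The main obstacle, and the only place the hypothesis $k > \frac{m}{2}$ enters, is the convergence of the lattice sum $\sum_{\boldsymbol{l} \in \mathbb{Z}^m} w_{\boldsymbol{l}}^{-1}$. I would handle it by first comparing $w_{\boldsymbol{l}}$ with $1 + |\boldsymbol{l}|^{2k}$, where $|\boldsymbol{l}|^2 = \sum_i l_i^2$: since the $\ell^{2k}$ and $\ell^2$ norms on $\mathbb{R}^m$ are equivalent up to dimension-dependent constants, the quantity $\sum_i (2\pi l_i)^{2k}$ is comparable to $|\boldsymbol{l}|^{2k}$, so the two sums converge together. Convergence of $\sum_{\boldsymbol{l}}(1+|\boldsymbol{l}|^{2k})^{-1}$ then follows from the integral comparison test: grouping lattice points into shells $|\boldsymbol{l}| \approx r$ and passing to polar coordinates, the tail behaves like $\int_1^{\infty} r^{m-1}\,r^{-2k}\,dr$, which is finite precisely when $2k - (m-1) > 1$, that is, when $k > \frac{m}{2}$. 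Collecting the estimates gives $\|u\|_{L^{\infty}(\Omega)} \le K \|u\|_{H^k(\Omega)}$ with $K = \bigl(\sum_{\boldsymbol{l}} w_{\boldsymbol{l}}^{-1}\bigr)^{1/2}$ depending only on $k$ and $m$, as claimed.
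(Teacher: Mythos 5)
Your proof is correct and follows essentially the same route as the paper's: expand $u$ in its Fourier series, bound $\|u\|_{L^{\infty}(\Omega)}$ by the $\ell^1$ norm of the coefficients, and apply Cauchy--Schwarz with a Sobolev weight, with the hypothesis $k>\frac{m}{2}$ entering only to make the lattice sum of reciprocal weights finite. The only differences are cosmetic: you use the weight $1+\sum_{i=1}^m(2\pi l_i)^{2k}$, identify the first Cauchy--Schwarz factor exactly as $\|u\|_{T^k(\Omega)}$ before invoking Theorem \ref{th_eq}, and spell out the shell-counting argument for convergence of $\sum_{\boldsymbol{l}} w_{\boldsymbol{l}}^{-1}$, whereas the paper uses the weight $\langle\boldsymbol{l}\rangle^{k}=(1+|\boldsymbol{l}|^2)^{k/2}$ and simply asserts that its corresponding sum is finite.
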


\begin{proof}
Let us express $u$ in terms of its Fourier series coefficients $\hat u_{\boldsymbol{l}},\boldsymbol{l} \in \mathbb{Z}^m$, via the Fourier series expansion and then the trick is to multiply by 1 in disguise, with $\langle \boldsymbol{l}\rangle := \sqrt{1+|\boldsymbol{l}|^2}$  
\begin{equation}
u(\boldsymbol{x}) = \sum\limits_{\boldsymbol{l}\in \mathbb{Z}^m} \hat u_{\boldsymbol{l}}e^{2\pi i \boldsymbol{l}\cdot \boldsymbol{x}}  = \sum\limits_{\boldsymbol{l}\in \mathbb{Z}^m} \hat u_{\boldsymbol{l}} \langle \boldsymbol{l}\rangle^k \langle \boldsymbol{l}\rangle^{-k} e^{2\pi i \boldsymbol{l}\cdot \boldsymbol{x}} 
\end{equation}
by Hölder's inequality,
\begin{equation}
 |u(\boldsymbol{x})| \le \sum\limits_{\boldsymbol{l}\in \mathbb{Z}^m} \left |\hat u_{\boldsymbol{l}}\langle \boldsymbol{l}\rangle^k \right| \langle \boldsymbol{l}\rangle^{-k} \le \sqrt{\sum\limits_{\boldsymbol{l}\in \mathbb{Z}^m}|\hat u_{\boldsymbol{l}}\langle \boldsymbol{l}\rangle^{k}|^2 \sum\limits_{\boldsymbol{l}\in \mathbb{Z}^m}|\langle \boldsymbol{l}\rangle^{-k}|^2}   
\end{equation} 

By Plancherel's Theorem, $\sqrt{\sum\limits_{\boldsymbol{l}\in \mathbb{Z}^m}|\hat u_{\boldsymbol{l}}\langle \boldsymbol{l}\rangle^{k}|^2} = \|u\|_{H^{k}}$ and 
$K = \sqrt{\sum\limits_{\boldsymbol{l}\in \mathbb{Z}^m}|\langle \boldsymbol{l}\rangle^{-k}|}$ is a constant depending only on  $k,n$, which is finite as $k > m/2$. This completes the proof.
\end{proof}

\begin{theorem}
\label{completeness}
Given that $k>\frac{m}{2}$, any sequence in $S$, that converges in the norm $\|.\|_{T^k}$, also converges uniformly to a limit function in $S$.
\end{theorem}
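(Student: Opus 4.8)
The plan is to leverage the two preceding theorems, namely the norm equivalence of Theorem~\ref{th_eq} and the Sobolev embedding of Theorem~\ref{th_uconv}, and to combine them with the completeness of the ambient Hilbert space $H^k(\Omega)$. The overall strategy is to transfer the Cauchy condition from $\|\cdot\|_{T^k}$ to $\|\cdot\|_{H^k}$, obtain a limit in $H^k(\Omega)$, upgrade the mode of convergence to uniform convergence, and finally verify that the limit actually lies in $S$.

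First I would take a sequence $\{f_n\}\subset S$ that is Cauchy with respect to $\|\cdot\|_{T^k}$. By Theorem~\ref{th_eq} the norms $\|\cdot\|_{T^k}$ and $\|\cdot\|_{H^k}$ are equivalent on $S$, so $\{f_n\}$ is also Cauchy in $\|\cdot\|_{H^k}$. Since $H^k(\Omega)$ is a complete space, there exists a limit $f\in H^k(\Omega)$ with $\|f_n-f\|_{H^k}\to 0$. Next I would invoke Theorem~\ref{th_uconv}: because $k>m/2$, we have $\|f_n-f\|_{L^\infty}\le K\|f_n-f\|_{H^k}\to 0$, so the sequence converges uniformly. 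As a uniform limit of the continuous functions $f_n$, the limit is continuous, and because each $f_n$ satisfies the periodic boundary conditions while these are pointwise identities preserved under uniform convergence, the limit satisfies them as well. Hence the limit lies in $M$, and combined with membership in $H^k(\Omega)$ this places it in $S$. Reapplying the norm equivalence then shows $f_n\to f$ in $\|\cdot\|_{T^k}$, so the convergence in $\|\cdot\|_{T^k}$ indeed forces uniform convergence to a limit in $S$.

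The step I expect to be the main obstacle is the careful identification underlying the previous paragraph: the $H^k$-limit $f$ is a priori only an equivalence class defined up to a null set, whereas the uniform limit is a genuine everywhere-defined continuous function. I would reconcile the two by noting that $H^k$-convergence implies $L^2(\Omega)$-convergence, so some subsequence of $\{f_n\}$ converges to $f$ almost everywhere; this almost-everywhere limit must coincide with the everywhere-defined uniform limit, so $f$ admits the continuous representative and the boundary conditions pass to the limit pointwise. Once this reconciliation is in place, everything else is a routine chaining of the two cited inequalities, and no new estimates are required beyond Theorems~\ref{th_eq} and~\ref{th_uconv}.
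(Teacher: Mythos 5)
Your proposal is correct and follows essentially the same route as the paper's own proof: use the norm equivalence of Theorem~\ref{th_eq} to pass from $\|\cdot\|_{T^k}$ to $\|\cdot\|_{H^k}$, apply the embedding of Theorem~\ref{th_uconv} to upgrade to uniform convergence, and observe that continuity and the periodic boundary conditions survive uniform limits, so the limit lies in $M\cap H^k(\Omega)=S$. If anything, your version is slightly more careful than the paper's, since you produce the limit via completeness of $H^k(\Omega)$ (rather than assuming a limit exists) and you explicitly reconcile the $H^k$ equivalence-class limit with its everywhere-defined continuous representative --- details the paper glosses over but which match how the theorem is later used (for Cauchy sequences).
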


\begin{proof}
Let $\{f_n\}\to f$ under the norm $\|.\|_{T^k}$, then $\|f_n-f\|_{T^k}\to 0$, so $\|f_n-f\|_{H^k} \to 0$, (as $\|.\|_{T^k}$ is equivalent to $\|.\|_{H^k}$ due to Theorem \ref{th_eq}) and hence due to Theorem \ref{th_uconv}, $\|f_n-f\|_{L^{\infty}(\Omega)} \to 0$. So, as this sequence of continuous functions with periodic boundary conditions converges uniformly, the limit function $f$ is also a continuous function with periodic boundary conditions and so $f\in M $. It is evident that $f \in H^k(\Omega)$, so $f\in S$. 
\end{proof}

\begin{theorem}
Given that $k>\frac{m}{2}$, the minimizer of the functional $C(f)$ over the set $S$ exists and is unique.
\end{theorem}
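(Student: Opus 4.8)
The plan is to treat this as a standard convex minimization problem on a Hilbert space, using the direct method of the calculus of variations for existence and strict convexity for uniqueness. The first observation is that $S$, equipped with the inner product associated to $\|\cdot\|_{T^k(\Omega)}$, is a Hilbert space: by Theorem \ref{th_eq} this norm is equivalent to the $H^k$ norm, and Theorem \ref{completeness} shows that $S$ is complete, so it is a closed subspace of $H^k(\Omega)$, hence reflexive. The second key observation, which is what makes the functional $C$ meaningful in the first place, is that each point evaluation $f \mapsto f(\boldsymbol{p_i})$ is a bounded linear functional on $S$: by Theorem \ref{th_uconv} we have $|f(\boldsymbol{p_i})| \le \|f\|_{L^\infty(\Omega)} \le K\|f\|_{H^k(\Omega)}$, and this is controlled by $\|f\|_{T^k(\Omega)}$ through the equivalence of norms.

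For existence I would argue via a minimizing sequence. Since $C(f) \ge 0$ for all $f$, the infimum $c^\ast = \inf_{f \in S} C(f)$ is a finite nonnegative number; pick $\{f_j\} \subset S$ with $C(f_j) \to c^\ast$. Because $\|f_j\|_{T^k(\Omega)}^2 \le C(f_j)$, the sequence $\{f_j\}$ is bounded in $S$, so by weak compactness of bounded sets in a Hilbert space I can extract a subsequence converging weakly, $f_j \rightharpoonup f^\ast$, with $f^\ast \in S$ since $S$ is a closed subspace and therefore weakly closed. Now I pass to the limit: the norm-squared term is weakly lower semicontinuous, while each point-evaluation functional is weakly continuous (being bounded and linear), so $f_j(\boldsymbol{p_i}) \to f^\ast(\boldsymbol{p_i})$ and hence the data term $\sum_i (f_j(\boldsymbol{p_i}) - a_i)^2$ converges to $\sum_i (f^\ast(\boldsymbol{p_i}) - a_i)^2$. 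Combining these yields $C(f^\ast) \le \liminf_j C(f_j) = c^\ast$, so $f^\ast$ attains the infimum and is a minimizer.

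For uniqueness I would invoke strict convexity. The map $f \mapsto \|f\|_{T^k(\Omega)}^2$ is strictly convex because it is the squared norm of an inner-product space (the parallelogram law gives strict inequality for distinct elements), and each term $(f(\boldsymbol{p_i}) - a_i)^2$ is convex, being the square of an affine functional; thus $C$ is strictly convex, and a strictly convex functional has at most one minimizer. Equivalently, and more in keeping with the quadratic-form viewpoint used later in the paper, one may write $C(f) = a(f,f) - 2\ell(f) + \sum_i a_i^2$, where $a(u,v) = \langle u,v\rangle_{T^k(\Omega)} + \sum_i u(\boldsymbol{p_i})\,v(\boldsymbol{p_i})$ is a bounded, symmetric, coercive bilinear form (coercive since $a(f,f) \ge \|f\|_{T^k(\Omega)}^2$) and $\ell(v) = \sum_i a_i\, v(\boldsymbol{p_i})$ is a bounded linear functional; the Lax--Milgram theorem then delivers existence and uniqueness of the minimizer in a single stroke. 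The only genuine subtlety, which Theorem \ref{th_uconv} has already removed, is the legitimacy and continuity of the point evaluations; once those are known to be bounded linear functionals on $S$, the statement reduces to the textbook theory of coercive quadratic functionals on Hilbert spaces, so I expect no serious obstacle beyond carefully citing the embedding that requires $k > m/2$.
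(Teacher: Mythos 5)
Your proof is correct, but it takes a genuinely different route from the paper's. The paper also starts from a minimizing sequence $\{f_n\}$ with $C(f_n)\to\delta$, but it argues \emph{strong} convergence: it asserts that the sequence is Cauchy in $\|\cdot\|_{T^k}$, invokes Theorem \ref{completeness} to conclude that $S$ is a closed (hence complete) subspace in which the limit $g$ lives, and then uses the uniform convergence furnished by Theorem \ref{completeness} to pass the point-evaluation terms to the limit, giving $C(g)=\delta$; uniqueness is attributed to convexity. You instead run the direct method in its weak form: boundedness of the minimizing sequence in the $T^k$ norm, weak compactness and weak closedness of the subspace $S$, weak lower semicontinuity of the squared norm, and weak continuity of the point evaluations $f\mapsto f(\boldsymbol{p_i})$ (bounded linear functionals by Theorem \ref{th_uconv} and the norm equivalence of Theorem \ref{th_eq}). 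The trade-off is instructive. The paper's route yields strong convergence of the whole minimizing sequence, but its pivotal step --- that positivity of the two terms makes $\{f_n\}$ Cauchy --- is underjustified as written: positivity and boundedness of $C(f_n)$ give only boundedness of $\|f_n\|_{T^k}$, and to upgrade this to Cauchy one needs the parallelogram identity for $\|\cdot\|_{T^k}$ combined with convexity of the data term (the standard strong-convexity argument), none of which is spelled out. Your weak-compactness argument sidesteps that difficulty entirely, at the modest cost of invoking reflexivity and Mazur's theorem. Your uniqueness argument is also more careful than the paper's: convexity alone only yields a convex set of minimizers, whereas you correctly identify \emph{strict} convexity of the squared norm (via the parallelogram law) as the operative property. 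Finally, your Lax--Milgram reformulation, with the coercive symmetric form $a(u,v)=\langle u,v\rangle_{T^k(\Omega)}+\sum_i u(\boldsymbol{p_i})v(\boldsymbol{p_i})$, is an alternative absent from the paper; it delivers existence and uniqueness in one stroke and connects naturally to the quadratic-form and closed-form viewpoints of Section \ref{DCS} and the linear-algebra solution given later.
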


\begin{proof}
Let $\delta$ be the infimum of $C(f)$ over the set $S$. So there exists a sequence $\{f_n\}, f_n \in S$ such that $C(f_n) \to \delta$. Since both terms of $C(f)$ are positive, due to first term, $\{f_n\}$ is Cauchy under the norm $\|.\|_{T^k}$. Due to theorem \ref{completeness}, $S$ is a closed linear subspace of the Hilbert space $H^k$, and with the inner product induced by restriction, is also a Hilbert space in its own right. Hence the sequence $\{f_n\}$ converges to a limit function $g \in S$ under the norm $\|.\|_{T^k}$, which also means
\begin{equation}
\label{eq_edit_1}
\|f_n\|_{T^k} \to \|g\|_{T^k}
\end{equation}
Again due to theorem \ref{completeness}, $\{f_n\} \to g$ pointwise. So \begin{equation}
\label{eq_edit_2}
f_n(\boldsymbol{p_i}) \to g(\boldsymbol{p_i}), i = 1,2,..N
\end{equation}
Using Equations \ref{eq_edit_3},\ref{eq_edit_1},\ref{eq_edit_2}, we can say that $C(f_n)\to C(g)$, and therefore $C(g) = \delta$. Hence as $g \in S$, the infimum of $C(f)$ over set $S$ is attained in $S$. Uniqueness follows from the convexity of $C(f)$.
\end{proof}
This proves the existence and uniqueness of the solution to the minimization problem.

\section{Euler-Lagrange Equation}

We now derive the Euler-Lagrange equation of the minimization problem posed in earlier section, and show that it is a linear weak PDE with some global terms.

Minimize in $S$, \begin{equation} C(f) = \sum\limits_{i = 1}^n(f(\boldsymbol{p_i})-a_i)^2 +   \|f\|_{L^2(\Omega)}^2 + \lambda \|\nabla^k f\|_{L^2(\Omega)}^2\end{equation}

We will derive the Euler Lagrange equation for the above problem, step by step for each term separately.
For any $\phi \in C^{\infty}(\Omega)\cap M$

\begin{equation}\frac{d}{ds}|_{s=0} \|f+s\phi\|_{L^2(\Omega)}^2 = \frac{d}{ds}|_{s=0} \int_{\Omega} |f+s\phi|^{2} \stackrel{*}{=} 2\int_{\Omega} \phi f\end{equation} where $*$ can be justified by using the dominated convergence theorem.

\begin{equation}\frac{d}{ds}|_{s=0} \lambda\| \nabla^k(f+s\phi)\|_{L^2(\Omega)} = \frac{d}{ds}|_{s=0} \int_\Omega \lambda|\nabla^k f+s \nabla^k \phi|^2 = 2\lambda\int_\Omega \nabla^k \phi \cdot \nabla^k f \end{equation}

\begin{equation}\frac{d}{ds}|_{s=0} \sum\limits_{i = 1}^n|f(\boldsymbol{p_i})+s\phi(\boldsymbol{p_i})-a_i|^2 =  2 \sum\limits_{i = 1}^n (f(\boldsymbol{p_i})-a_i)\phi(\boldsymbol{p_i})
\end{equation}

and putting all terms together, we get the following PDE as the Euler-Lagrange equation for the minimization problem.

\begin{equation}
\label{tkdef2}
\lambda\int_{\Omega}\nabla^k\phi(\boldsymbol{x})\cdot \nabla^k f(\boldsymbol{x}) \mathop{}\!\mathrm{d}^m\boldsymbol{x}+  \int_{\Omega} \phi(\boldsymbol{x}) f(\boldsymbol{x})\mathop{}\!\mathrm{d}^m\boldsymbol{x}   + \sum\limits_{i = 1}^{n}(f(\boldsymbol{p_i})-a_i)\phi(\boldsymbol{p_i}) = 0 \forall \phi \in C^{\infty}(\Omega) \cap M
\end{equation}


The equation is not a PDE in strict sense, due to the appearance of global terms in it, like $f\boldsymbol{(p_i}) = \int_{\Omega} f(\boldsymbol{x})\delta(\boldsymbol{x-p_i})$.

\section{Fitness to Data Analysis}
\label{FDA}

In this section we describe how to control the fitness of the function on data points, by controling the value of $\lambda$. Let $f_{\lambda}$ be the solution of the Euler Lagrange equation, i.e the minimizer of the minimization problem in section \ref{Minimizer}.

\begin{theorem}
\label{du_th}
If $f_{\lambda}$ is the minimizer of $C(f)$, then \begin{equation}
  \lim\limits_{\lambda\to0}f_{\lambda}(\boldsymbol{p_i}) = a_i, i = 1,2...n
\end{equation}
and 

\begin{equation}
   \lim\limits_{\lambda\to 0}f_{\lambda}(\boldsymbol{x}) = 0 \text{ almost everywhere}
\end{equation}

\end{theorem}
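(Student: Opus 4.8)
The plan is to prove that the minimum value $C(f_\lambda)$ tends to $0$ as $\lambda \to 0$, and then to extract both conclusions from the fact that $C$ is a sum of three nonnegative terms. Since $f_\lambda$ minimizes $C$ over $S$, for every admissible $g \in S$ we have $C(f_\lambda) \le C(g)$. It therefore suffices to produce a family $g_\lambda \in S$ that interpolates the data exactly, so that its data-fitting term vanishes, and whose remaining energy $\|g_\lambda\|_{L^2(\Omega)}^2 + \lambda \|\nabla^k g_\lambda\|_{L^2(\Omega)}^2$ tends to $0$ as $\lambda \to 0$.

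To build such a family I would place a sharply localized smooth bump of height $a_i$ at each data point. Fix $\psi \in C_c^\infty(\mathbb{R}^m)$ with $\psi(\boldsymbol{0}) = 1$ and support in the unit ball, and for a width $r > 0$ set $g_\lambda(\boldsymbol{x}) = \sum_{i=1}^n a_i\, \psi\!\left(\frac{\boldsymbol{x} - \boldsymbol{p_i}}{r}\right)$. For $r$ small the supports are disjoint and lie in the interior of $\Omega$, so $g_\lambda$ meets the periodic boundary conditions and belongs to $S$, and by construction $g_\lambda(\boldsymbol{p_i}) = a_i$, so its data-fitting term vanishes. The decisive computation is the behaviour of the two norms under the dilation by $r$: a change of variables gives $\|g_\lambda\|_{L^2(\Omega)}^2 = C_1 r^m$ and $\|\nabla^k g_\lambda\|_{L^2(\Omega)}^2 = C_2 r^{m - 2k}$, where $C_1, C_2$ depend only on $\psi, k, m$ and the $a_i$. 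Hence $C(g_\lambda) = C_1 r^m + \lambda C_2 r^{m - 2k}$. Choosing $r = \lambda^{1/(2k)}$ --- which tends to $0$ as $\lambda \to 0$, so the smallness requirements on $r$ are eventually met --- gives $C(g_\lambda) = (C_1 + C_2)\, \lambda^{m/(2k)} \to 0$. Here the hypothesis $2k > m$ is what makes the gradient exponent $m - 2k$ negative, so that the penalty term genuinely controls the bump width and this balance is the right one; the same hypothesis (via Theorem \ref{th_uconv}) is what guarantees that $f_\lambda$ is continuous and that the point values $f_\lambda(\boldsymbol{p_i})$ are even defined. Verifying the derivative scaling for the $k$-gradient and checking that the bumps remain admissible is the step I expect to need the most care.

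With $C(f_\lambda) \le C(g_\lambda) \to 0$ in hand, each of the three nonnegative summands of $C(f_\lambda)$ tends to $0$. From $\sum_{i=1}^n (f_\lambda(\boldsymbol{p_i}) - a_i)^2 \to 0$ we get $f_\lambda(\boldsymbol{p_i}) \to a_i$ for every $i$, which is the first assertion. From $\|f_\lambda\|_{L^2(\Omega)}^2 \to 0$ we get $f_\lambda \to 0$ in $L^2(\Omega)$; to reach the stated almost-everywhere convergence I would take any sequence $\lambda_j \to 0$ and, by the Riesz--Fischer argument, pass to a subsequence along which $f_{\lambda_j} \to 0$ pointwise almost everywhere, or choose $\lambda_j$ with $\sum_j \|f_{\lambda_j}\|_{L^2(\Omega)}^2 < \infty$ and apply Borel--Cantelli. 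Upgrading this to the continuous limit $\lambda \to 0$, rather than along sequences, is the only genuinely delicate point of the argument, and is where I would either restrict the statement to sequences or add a monotonicity argument in $\lambda$.
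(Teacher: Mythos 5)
Your proof is correct and shares the paper's overall strategy for the first assertion (compare $f_\lambda$ against bump-function competitors that interpolate the data exactly, conclude $C(f_\lambda)\to 0$, then read off the limits from nonnegativity of the three terms), but it differs in two genuinely useful ways. First, where the paper fixes the bump radius $r$ and argues through iterated limits in $r$ and $\lambda$ (its Equations \ref{tp} and \ref{tt2} even state the two iterated limits in opposite orders, a slip the argument survives only because $\theta_r$ interpolates exactly, so the inner limit is just $\|\theta_r\|_{L^2(\Omega)}^2$), you couple the radius to the parameter via $r=\lambda^{1/(2k)}$ and do an explicit dilation computation, giving the clean squeeze $0\le C(f_\lambda)\le (C_1+C_2)\lambda^{m/(2k)}$. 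This avoids any interchange of limits, avoids tacitly assuming that $\lim_{\lambda\to 0}C(f_\lambda)$ exists, and yields a quantitative rate the paper does not have. Second, for the almost-everywhere statement the paper takes a detour through the Euler--Lagrange equation and Cauchy--Schwarz to conclude $\int_\Omega \phi\, f_\lambda \to 0$ for all test functions $\phi$, and then asserts almost-everywhere convergence; that last step is a non sequitur, since weak convergence to zero does not imply pointwise convergence. You instead use $\|f_\lambda\|_{L^2(\Omega)}\to 0$ directly (which the decomposition already provides, making the paper's detour redundant) and correctly observe that $L^2$ decay gives almost-everywhere convergence only along subsequences, via Riesz--Fischer or Borel--Cantelli. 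The delicate point you flag --- upgrading subsequential a.e.\ convergence to the continuous limit $\lambda\to 0$ --- is a genuine gap in the theorem as stated, and it afflicts the paper's own proof equally; your suggestion to formulate the second conclusion along sequences $\lambda_j\to 0$ (or as convergence in $L^2$ or in measure) is the defensible fix.
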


\begin{proof}

Let $B^r_i$ be balls of radius $r$ around points $\boldsymbol{p_i}$, and let $B^r = \bigcup\limits_{i = 1}^nB^r_i$

Given any $g \in S$, fixing $g$, we can see that  
\begin{equation}
\label{tlo}
   \lim\limits_{\lambda\to 0}C(g) = \sum\limits_{i = 1}^n(g(\boldsymbol{p_i})-a_i)^2 +   \|g\|_{L^2(\Omega)}^2 
\end{equation}

Consider the function \begin{equation} \theta_r = \sum\limits_{i = 1}^n\phi_i \end{equation} where $\phi_i$ is a bump function with support on the ball $B^r_i$ and also $\phi_i(\boldsymbol{p_i}) = a_i$
Therefore

\begin{equation}
\label{tl}
   \lim\limits_{\lambda\to 0}C(\theta_r) = \sum\limits_{i = 1}^n(\theta_r(\boldsymbol{p_i})-a_i)^2 +   \|\theta_r\|_{L^2(\Omega)}^2 
\end{equation}

For any given $\lambda$, let $f_{\lambda}$ denote the minimizer of the functional $C(f)$. By definition of $f_{\lambda}$, $C(f_{\lambda})\le C(\theta_r)$,

\begin{equation}
\label{tm}
   \lim\limits_{\lambda\to 0}C(f_{\lambda}) \le \lim\limits_{\lambda\to 0}C(\theta_r)
   \forall r
\end{equation}
therefore 
\begin{equation}
\label{tp}
   \lim\limits_{\lambda\to 0}C(f_{\lambda}) \le \lim\limits_{r\to 0}\lim\limits_{\lambda\to 0}C(\theta_r)
\end{equation}

Now using Equation \ref{tl}, its easy to see that \begin{equation}
\label{tt2}
\lim\limits_{\lambda\to 0}\lim\limits_{r\to 0}C(\theta_r)  = \lim\limits_{r\to 0}\{ \sum\limits_{i = 1}^n(\theta_r(\boldsymbol{p_i})-a_i)^2 +   \|\theta_r\|_{L^2(\Omega)}^2 \} = 0 
\end{equation}
So using Equations \ref{tp} and \ref{tt2}
\begin{equation}
    \lim\limits_{\lambda\to 0}C(f_{\lambda}) = 0
\end{equation}

Hence each term of $C(f_{\lambda})$ should go to $0$ as $\lambda \to 0$, which gives the following results

\begin{equation}
\label{de1}    \lim\limits_{\lambda\to 0} \sum\limits_{i = 1}^n(f_{\lambda}(\boldsymbol{p_i})-a_i)^2 = 0
\end{equation}

\begin{equation}
    \lim\limits_{\lambda\to 0} \| f_{\lambda}\|_{L^2(\Omega)} = 0
\end{equation}

and 

\begin{equation}
\label{d2}
    \lim\limits_{\lambda\to 0}\lambda \|\nabla^k f_{\lambda}\|_{L^2(\Omega)} = 0
\end{equation}

As all terms in Equation \ref{de1} are positive,
\begin{equation}
\label{des1}
\lim\limits_{\lambda\to 0} f_{\lambda}(\boldsymbol{p_i}) = a_i
\end{equation} 
which proves the first statement of the theorem.

To prove the second statement of the theorem, at first from equation \ref{de1} and the fact that $f_{\lambda}$ satisfies the Euler-Lagrange Equation \ref{tkdef2},

\begin{equation}
\label{tpp}
    \lim\limits_{\lambda\to 0}\lambda\int_{\Omega}\nabla^k\phi(\boldsymbol{x})\cdot \nabla^k f_{\lambda}(\boldsymbol{x}) \mathop{}\!\mathrm{d}^m\boldsymbol{x} =  -\lim\limits_{\lambda\to 0} \int_{\Omega} \phi(\boldsymbol{x}) f_{\lambda}(\boldsymbol{x})\mathop{}\!\mathrm{d}^m\boldsymbol{x}   \forall \phi \in C^{\infty}(\Omega) \cap M
\end{equation}

By Cauchy-Schwartz inequality, there exists a positive real $L$ such that \begin{equation}
    \label{CS}
    \lambda\int_{\Omega}\nabla^k\phi(\boldsymbol{x})\cdot \nabla^k f_{\lambda}(\boldsymbol{x}) \mathop{}\!\mathrm{d}^m\boldsymbol{x} \le L \lambda \|\nabla^k f_{\lambda}\|_{L^2(\Omega)}\|\nabla^k \phi\|_{L^2(\Omega)}
\end{equation} 

Now using Equations \ref{de1} and \ref{d2} \begin{equation}
\label{tq}
\lim\limits_{\lambda\to 0}\lambda\int_{\Omega}\nabla^k\phi(\boldsymbol{x})\cdot \nabla^k f_{\lambda}(\boldsymbol{x}) \mathop{}\!\mathrm{d}^m\boldsymbol{x} = 0
\end{equation}

Hence from Equations \ref{tpp} and \ref{tq}
\begin{equation}
    \lim\limits_{\lambda\to 0} \int_{\Omega} \phi(\boldsymbol{x}) f_{\lambda}(\boldsymbol{x})\mathop{}\!\mathrm{d}^m\boldsymbol{x} = 0   \forall \phi \in C^{\infty}(\Omega) \cap M
\end{equation}
This means that $ \lim\limits_{\lambda\to 0}f_{\lambda}(x) = 0  \text{ almost everywhere}$, which completes the proof for the later statement of the theorem.

\end{proof}

\subsection{Trade-off}
To increase the fitness on the data points, the value of $\lambda$ needs to be decreased. However, due to the second statement of theorem \ref{du_th}, this decrease will cause the undesirable effect of the function going to zero, almost everywhere other than the data points. To reduce this effect of function concentration on the data points, $\lambda$ needs to be increased. However, based on the first statement of theorem \ref{du_th}, this increase could cause a loss of fitness on the data points. Therefore, a trade-off between the fitness on the data points and the spread of the function over $\Omega$ is essential. This trade-off can be achieved by appropriately setting $\lambda$.

\section{Method for Fitting a Sobolev function}

\label{Method}

Consider the family of functionals over the parameter $\lambda \in (0.\infty)$
\begin{equation}
C_{\lambda}(f) = \sum\limits_{i = 1}^n(f(\boldsymbol{p_i})-a_i)^2 + \|f\|_{L^2(\Omega)}^2 + \lambda \|\nabla^k f\|_{L^2(\Omega)}^2
\end{equation}
where  $k \in \mathbb{N}, k > \frac{m}{2}$

Let $f_{\lambda}$ denote the unique minimizer of $C_{\lambda}(f)$ over the set $S$. Let a maximum of $\|f_{\lambda}\|_{L^2(\Omega)}$ over $\lambda \in (0,\infty)$ be achived at $\lambda = \lambda_0$. The function $h = f_{\lambda_0}$ is the desired function that fits the data(not necessarily a perfect fit to the data points).

\subsection{Existence and Uniqueness}

It has been shown in Section \ref{Minimizer} that $f_{\lambda}$ is unique for any given $\lambda \in (0,\infty)$. Relevant theorems are also included in the appendix section of this paper. To show the existence of $h = f{_{\lambda_0}}$, we can prove the following facts.

It has been deduced in Section \ref{FDA} that
$$\lim_{\lambda \to 0} \|f_{\lambda}\|_{L^2(\Omega)} = 0 $$

and using Poincare's inequality\cite{evans10}, one can deduce that

$$\lim_{\lambda \to \infty} \|f_{\lambda}\|_{L^2(\Omega)} = 0 $$

To prove uniqueness of $h$, it remains to be shown that $\lambda_0$ is unique.

\section{Parametrization of the Minimizer using Closed form Solution to the PDE}
\label{DCS}

In this section we describe a closed form solution to the derived PDE, which enables the parametrization of the solution function.

Let \begin{equation}g_{\lambda}(\boldsymbol{x}) = \sum\limits_{{\pmb{\eta}\in\mathbb{Z}^m}}\frac{1}{1+\lambda\sum\limits_{i=1}^{m}\eta_i^{2k}} \cos({2\pi \pmb{\eta}\cdot\pmb{x}})\end{equation}

Assuming $k >\frac{m}{2}$, using Bochner's theorem we can see that the function $g_{\lambda}$ is positive definite. The solution to the PDE (the Euler-Lagrange equation for the minimization problem)  is then given as

\begin{equation}f_{\lambda}(\pmb{x}) = \sum\limits_{i=1}^{n}c_ig_{\lambda}(\pmb{x-p_i}) \end{equation} where $c_i \in \mathbb{R}$.
Let $\pmb{c} = [c_1,c_2,...c_n]^T$. We can determine $\pmb{c}$ by substituting the above expression for $f_{\lambda}(\pmb{x})$ in the PDE equation and is given as \begin{equation}\pmb{c} = (G_{\lambda}+I)^{-1}L\end{equation} where the matrix $G_{\lambda}$ is given as $G_{\lambda} = [\gamma_{ij}]_{n\times n},\gamma_{ij} = g_{\lambda}(\pmb{p_i}-\pmb{p_j})$ and $L = [a_1,a_2,....a_n]^T$. The matrix $G_{\lambda}$ is positive semidefinite as $g_{\lambda}$ is a positive definite function. Hence the matrix $G_{\lambda}+I$ is positive definite and is invertible.

One can see that it has close similarities with regularized kernel based regression \cite{poggio1990networks},\cite{belkin2006manifold}. The difference is, here the parameter lambda is not the usual regularization parameter, and by changing lambda we are not changing the regularization of the kernel, but varying the lambda, we are navigating between different kernels, with regularization parameter being at constant and equal to 1.

\section{Numerical Solution to the Minimization Problem}

While we can numerically solve the PDE using the closed form solution given in Section \ref{DCS}, alternatively, instead of solving the PDE, we can directly solve the minimization problem. As the minimization problem is convex, it can be solved using a gradient descent algorithm. As $S$ is as the Hilbert space, due to the Plancheral theorem, optimization can be directly applied on the Fourier series coefficients. First, we discretize the domain into uniformly-spaced samples sampled at a frequency $\boldsymbol{\omega}$Hz. We also discretize the data, so that each data points is mapped to one of into uniformly spaced discrete samples of the domain. If multiple data points fall into the same discrete sample, the average of the values of the data points is assumed. We then compute Nyquist sampled version $f_{\boldsymbol{\omega}}$ of solution $f$, by expressing $C(f_{\boldsymbol{\omega}})$ in terms of the Discrete Fourier Transform (DFT) coefficients of $f_{\boldsymbol{\omega}}$. For the computation, Plancheral theorem is used and $C(f_{\boldsymbol{\omega}})$ is minimized by applying a gradient descent algorithm to the DFT coefficients. By choosing sufficiently high $\omega$, we can compute the sampled version of $f$ to desired accuracy. Due to uniform convergence of the Fourier series, the numerical solution converges uniformly to the actual solution $f$ as $\boldsymbol{\omega} \to \infty$.

\section{Numerical Solution using Linear Algebra}

There exists an alternate solution using linear algebra. $C(f_{\omega})$ can be expressed in terms of the Discrete Fourier Transform (DFT) coefficients of $f_{\omega}$ using Plancherel theorem. Taking the derivatives of the functional with respect to each of the DFT coefficients and setting the gradient to zero, we obtain a set of linear equations equal in number to the DFT coefficients. These simultaneous equations can be expressed in a matrix form. The solution can be expressed as:
\begin{equation}
A = (X^TX + I + \lambda M)^{-1}X^TL,
\end{equation}
where $A$ is a column vector of the DFT coefficients; $X$ is the transformation matrix corresponding to the $m$-dimensional DFT, with only columns pertaining to data points $p_i$ being retained; $I$ an identity matrix; $M$ is a diagonal matrix with entries that act as a Fourier multiplier corresponding to the magnitude of the $k$-gradient; and $L$ is a column vector containing data values $a_i$.

%


\section{Numerical Algorithm for choosing the optimal $\lambda$}

An Iterative algorithm which is a modified steepest descent, is run on $\hat{f}$, the Fourier Coefficients (DFT) of the function $f$. This algorithm simultaeously finds $\lambda_0$ (for which $\|f_{\lambda}\|_{L^2(\Omega)}$ is maximum) and also DFT coefficients $\hat{f}_{\lambda_o}$, from which $f_{\lambda_o}$ can be obtained using inverse DFT.
\begin{algorithm}

\begin{enumerate}

\item Initialize $\hat{f}$.
 
\item Assuming some $\lambda$ and assuming gradient of $C_{\lambda}(\hat{f})$ wrt $\hat{f}$ be $\nabla_{\hat{f}} C_{\lambda}(\hat{f})$, and if we were to update $\hat{f}$
with this gradient as in we do in steepest descent, it would be
$\hat{f}^u_\lambda = \hat{f} - \delta \nabla_{\hat{f}} C_{\lambda}(\hat{f})$, where $\delta$ is
a constant learning rate. Now set
$\frac{\partial\|\hat{f}^u_\lambda\|}{\partial \lambda} = 0$ and solve for
$\lambda$. Let the root be $\lambda_0$.
\item Update $\hat{f} = \hat{f}^u_{\lambda_0}$. (update $f$ as in steepest descent, but using $\lambda$ value as $\lambda_0$ which was computed in step
2.)
\item check some convergence criterion and if not met, go to step 2.

\end{enumerate}
\end{algorithm}
 
\section{Numerical Examples}

A 1 dimensional example, with green vertical lines indicating data points and blue plot showing the fitted function $h(x)$
\begin{center}
\includegraphics[scale = 0.22]{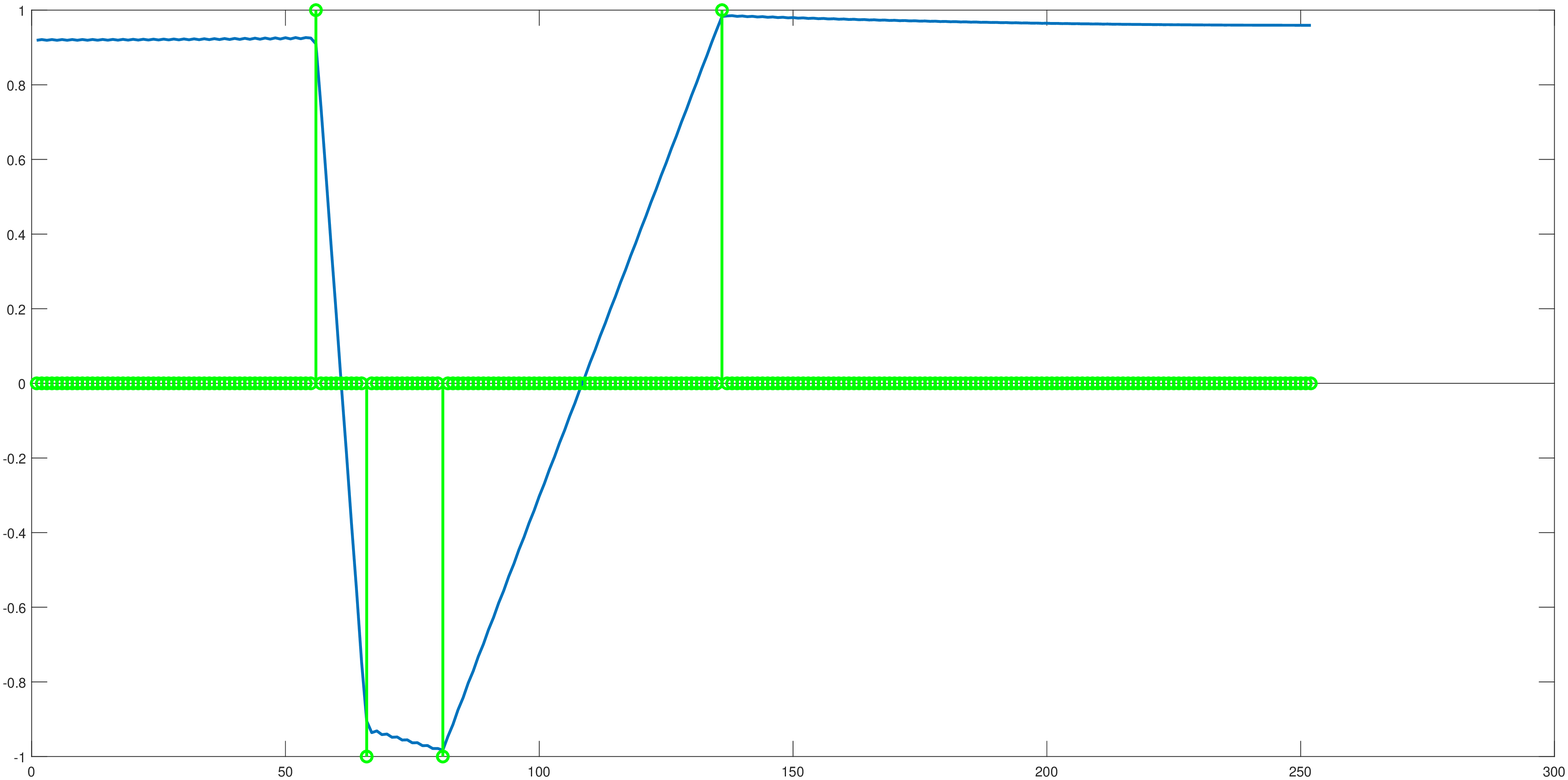}
\end{center}

We experimented with IRIS Dataset\cite{fisher1936138}, obtained from UCI repository\cite{Lichman:2013}, for a binary classification task (one class Vs all others).Given below are he specific details of the experiment.

\begin{enumerate}
\item 6 dimensional features, reduced to effectively 3 dimensions after feature space quantization.
\item First the features are normalized (zero meaned and scaled to lie between -1 and 1)
\item PCA is performed and the co-ordinate system with axes along the PCA axes is choosen. Feature space is quantized with number of quantization steps (or sampling frequency) along each direction is chosen to be proportional to the standard deviation (eigen value) along that direction).
\item After quantization we obtain a 61x23x9 multidimensional array to hold the feature space. Every example (feature vector) is mapped to one of the points in this array.
\item In training we estimate the DFT (61x23x9 point) coefficients of the desired function.
\item Positive class examples are assigned +1 as the data value and negative class examples are assigned 0. After that the data values are normalized to zero mean (as required by the theorem). Binary decision is  made in a way that positive valued examples are treated as positive class and negative valued examples are treated as negative class.
\end{enumerate}
The experiment achieved a perfect accuracy of $100$. A 1-dimenional plot of the solution function along an line parallel to longest axis, that passes through 3 data points is shown in the below picture. (green vertical lines indicating data points present on that line, and blue plot showing the fitted function $h(x)$) .

\begin{center}
\includegraphics[scale = 0.40]{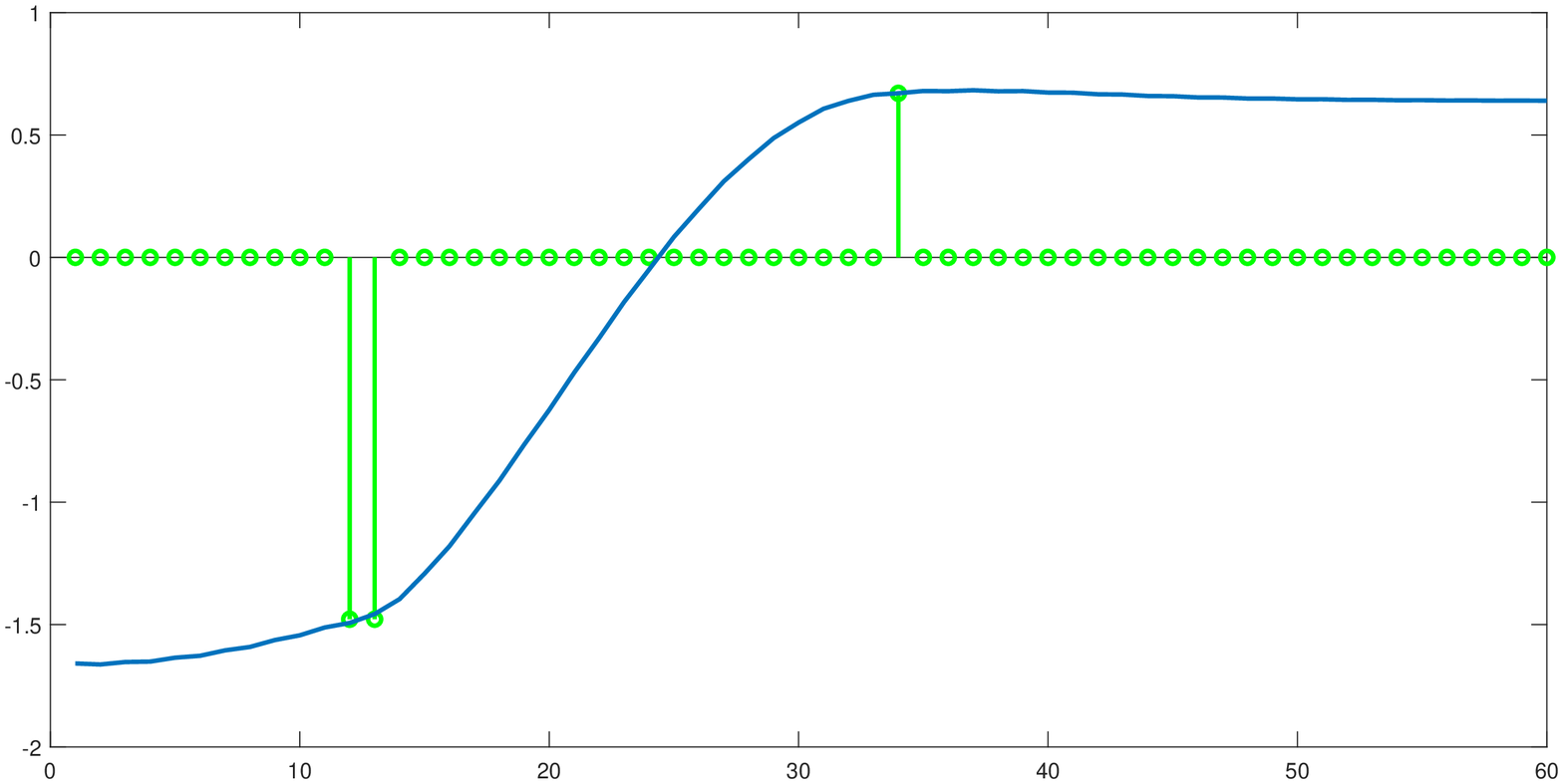}
\end{center}

\section{Applications}

This method can be used in machine learning because, in almost all problems in the field, there is a direct or indirect need for fitting a function to the data. As the functions in machine learning need not be periodic, the method can be used for non-periodic functions by even symmetric extension of both the domain and the data. 





\bibliographystyle{amsplain}
\bibliography{refs}

\providecommand{\bysame}{\leavevmode\hbox to3em{\hrulefill}\thinspace}
\providecommand{\MR}{\relax\ifhmode\unskip\space\fi MR }
\providecommand{\MRhref}[2]{%
  \href{http://www.ams.org/mathscinet-getitem?mr=#1}{#2}
}
\providecommand{\href}[2]{#2}
\begin{thebibliography}{10}

\bibitem{belkin2006manifold}
Mikhail Belkin, Partha Niyogi, and Vikas Sindhwani, \emph{Manifold
  regularization: A geometric framework for learning from labeled and unlabeled
  examples}, Journal of machine learning research \textbf{7} (2006), no.~Nov,
  2399--2434.

\bibitem{DBLP:journals/mcss/Cybenko89}
George Cybenko, \emph{Approximation by superpositions of a sigmoidal function},
  {MCSS} \textbf{2} (1989), no.~4, 303--314.

\bibitem{duchon1977splines}
Jean Duchon, \emph{Splines minimizing rotation-invariant semi-norms in sobolev
  spaces}, {Constructive Theory of Functions of Several Variables}, Springer,
  1977, pp.~85--100.

\bibitem{evans10}
Lawrence~C. Evans, \emph{Partial differential equations}, American Mathematical
  Society, Providence, R.I., 2010.

\bibitem{fefferman2016fitting}
Charles Fefferman, Arie Israel, and Garving~K Luli, \emph{Fitting a sobolev
  function to data \uppercase\expandafter{\romannumeral 1\relax}}, Revista
  Matem{\'a}tica Iberoamericana \textbf{32} (2016), no.~1, 275--376.

\bibitem{fefferman2016fitting2}
\bysame, \emph{Fitting a sobolev function to data
  \uppercase\expandafter{\romannumeral 3\relax}}, Revista Matem{\'a}tica
  Iberoamericana \textbf{32} (2016), no.~3, 1039--1126.

\bibitem{fefferman2009fitting}
Charles Fefferman and Bo'az Klartag, \emph{Fitting a $c^{m}$-smooth function to
  data \uppercase\expandafter{\romannumeral 1\relax}}, {Annals of Mathematics}
  (2009), 315--346.

\bibitem{fefferman2009fitting3}
\bysame, \emph{Fitting a $c^{m}$-smooth function to data
  \uppercase\expandafter{\romannumeral 3\relax}}, {Annals of Mathematics}
  \textbf{170} (2009), no.~1, 427--441.

\bibitem{fefferman2009fitting2}
Charles Fefferman, Bo'az Klartag, et~al., \emph{Fitting a $ c^{m} $-smooth
  function to data \uppercase\expandafter{\romannumeral 2\relax}}, Revista
  Matem{\'a}tica Iberoamericana \textbf{25} (2009), no.~1, 49--273.

\bibitem{fefferman2014fitting}
Charles~L Fefferman, Arie Israel, and Garving~K Luli, \emph{Fitting a sobolev
  function to data}, arXiv preprint arXiv:1411.1786 (2014).

\bibitem{fisher1936138}
Ronald~Aylmer Fisher et~al., \emph{138: The use of multiple measurements in
  taxonomic problems.},  (1936).

\bibitem{DBLP:journals/nn/Hornik91}
Kurt Hornik, \emph{Approximation capabilities of multilayer feedforward
  networks}, Neural Networks \textbf{4} (1991), no.~2, 251--257.

\bibitem{Lichman:2013}
M.~Lichman, \emph{{UCI} machine learning repository}, 2013.

\bibitem{poggio1990networks}
Tomaso Poggio and Federico Girosi, \emph{Networks for approximation and
  learning}, Proceedings of the IEEE \textbf{78} (1990), no.~9, 1481--1497.

\bibitem{Shawe-Taylor:2004:KMP:975545}
John Shawe-Taylor and Nello Cristianini, \emph{Kernel methods for pattern
  analysis}, Cambridge University Press, New York, NY, USA, 2004.

\end{thebibliography}

\end{document}